\documentclass[pdflatex,sn-mathphys-num]{sn-jnl}

\usepackage{graphicx}%
\usepackage{multirow}%
\usepackage{amsmath,amssymb,amsfonts}%
\usepackage{amsthm}%
\usepackage{mathrsfs}%
\usepackage[title]{appendix}%
\usepackage{xcolor}%
\usepackage{textcomp}%
\usepackage{manyfoot}%
\usepackage{booktabs}%
\usepackage{algorithm}%
\usepackage{algorithmicx}%
\usepackage{algpseudocode}%
\usepackage{listings}%
\usepackage{hyperref}

\theoremstyle{thmstyleone}%
\newtheorem{theorem}{Theorem}
\newtheorem{lemma}{Lemma}
\theoremstyle{thmstyletwo}%
\newtheorem{example}{Example}%

\theoremstyle{thmstylethree}%
\newtheorem{definition}{Definition}%

\begin{document}

\title[Borsuk-Ulam Type Theorems and Mountain Climbing Problem]{Borsuk-Ulam Type Theorems and Mountain Climbing Problem}


\author*[1]{\fnm{Ilya M.} \sur{Shirokov}}\email{shirokov.im@phystech.edu}

\author[1,2]{\fnm{Andrey V.} \sur{Malyutin}}

\author[2]{\fnm{Alisa} \sur{Volkova}}

\affil*[1]{\orgname{St.\ Petersburg Department of Steklov Mathematical Institute},
\orgaddress{\street{Fontanka, 27}, \city{St.\ Petersburg}, \postcode{191023}, \country{Russia}}}

\affil[2]{\orgname{St.\ Petersburg State University},
\orgaddress{\street{Universitetskaya Emb., 13B}, \city{St.\ Petersburg}, \postcode{199034}, \country{Russia}}}


\abstract{In this paper, we present a new qualitative extension of the Hopf theorem (and a generalization of Borsuk-Ulam theorem), concerning continuous maps $f$ from a compact Riemannian manifold $M$ of dimension $n$ to $\mathbb{R}^n$. We remove the assumption of a Riemannian structure and instead consider closed triangulable manifolds $M$ equipped with a topological notion of 'distant' points. We show that for any continuous map $f \colon M \to \mathbb{R}^n$, there exists a connected component in the space of $f$-neighbors (where a pair of points $a, b$ are $f$-neighbors if $f(a) = f(b)$) that contains both a pair of 'distant' points and a pair of identical points. This result yields further consequences for Lusternik-Schnirelmann and Tucker-type theorems, as well as a multidimensional extension of the mountain-climbing lemma, which in the special case of the standard Euclidean $2$-sphere, may be stated informally as follows. For any continuous distribution of temperature and pressure on Earth (assumed time-independent), there exists a pair of antipodal points with identical values such that travelers starting from these points can move and meet while, at each moment of their journey, experiencing matching 'climatic conditions' up to an arbitrarily small constant.}

\keywords{Borsuk-Ulam type theorems, the Hopf theorem, mountain-climbing problem, Lusternik-Schnirelmann and Tucker-type theorems}



\maketitle

\section{Introduction}\label{sec1}


This work was initiated as a continuation of our previous results on generalizations of the Hopf theorem, concerning continuous maps $f$ of a compact Riemannian manifold $M$ of dimension $n$ to Euclidean space $\mathbb{R}^n$  \cite{Previous, Previous1}. However the method and ideas, developed in the course of this investigation turned out to be applicable to a much wider class of the so-called `point-pair type theorems', where a pair of points plays a central role: Borsuk-Ulam, Tucker and Lusternik–Schnirelmann theorems, and mountain-climbing lemma. The common thread of all our results follows the principle that a set that is `large' in some sense is connected to a set that is `small' in some sense through sets satisfying a given relation. 

To describe our results in more detail, we start with the Hopf theorem:

\begin{theorem}[H. Hopf \cite{Hopf}]
    Let $n$ be a positive integer, let $M$ be a compact Riemannian manifold of dimension $n$, and let $f\colon M \to \mathbb{R}^n$ be a continuous map. Then for any prescribed $\delta > 0$, there exists a pair $\{x,y\} \in M \times M$ such that $f(x) = f(y)$ and $x$ and $y$ are joined by a geodesic of length~$\delta$.
\end{theorem}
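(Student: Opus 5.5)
Throughout, I argue by contradiction and reduce to an equivariant-topology statement about the unit tangent bundle. Let $SM$ be the unit tangent bundle, a closed $(2n-1)$-manifold. For $v\in S_xM$ let $\gamma_v$ be the unit-speed geodesic with $\gamma_v'(0)=v$. Define
\[
F_t(v)=f(\gamma_v(t))-f(\gamma_v(0)),\qquad 0<t\le\delta ,
\]
and the geodesic flip $\tau_t(v)=-\gamma_v'(t)\in S_{\gamma_v(t)}M$. Each $\tau_t$ is a fixed-point-free involution of $SM$ (the flow is reversible and $\tau_t(v)$ is a unit vector at the other endpoint), and $F_t(\tau_t v)=-F_t(v)$. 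So the theorem says exactly that the $\mathbb{Z}_2$-equivariant map $F_\delta\colon(SM,\tau_\delta)\to(\mathbb{R}^n,-)$ has a zero.

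First I reduce to smooth $f$. Since $M$ is compact, a uniform limit of zeros $v_k$ of $F^{(k)}_\delta$ (for smooth $f_k\to f$) is a zero of $F_\delta$. So assume $f$ smooth and, for contradiction, $F_\delta\ne0$ everywhere. Normalizing gives an equivariant map $\phi=F_\delta/|F_\delta|\colon (SM,\tau_\delta)\to (S^{n-1},-)$. Next I use the family $\tau_t$, $t\in(0,\delta]$, together with the limit $\tau_0(v)=-v$ (the fibrewise antipodal map). The isotopy of free involutions $\tau_t$ provides an equivariant homeomorphism $(SM,\tau_\delta)\cong(SM,\tau_0)$, so $\phi$ yields an equivariant map $(SM,\tau_0)\to(S^{n-1},-)$. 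Passing to quotients, this is a classifying map showing that the tautological line bundle $L$ over the projectivized tangent bundle $PM=SM/\tau_0$ admits $n$ everywhere linearly independent sections. Hence $w_1(L)^n=0$.

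The main obstacle is extracting a contradiction from this. By Leray--Hirsch, $w_1(L)^n=\sum_{i\ge1}w_i(TM)\,w_1(L)^{n-i}$, and this can vanish, e.g.\ for parallelizable $M$ such as tori. So the bare existence of an equivariant map is not enough. The argument must also use information from $f$ at scale $t\to0$, where $F_t(v)/t\to df_x(v)$. This derivative is odd and linear on each fibre, so it is equivariantly null-homotopic to nothing unless it vanishes somewhere.

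My plan here is to work with the whole cylinder $SM\times[0,\delta]$ and the odd map $G(v,t)=F_t(v)/t$, extended by $G(v,0)=df_x(v)$. For generic $f$, $G$ is transverse to $0$, and $Z=G^{-1}(0)$ is an $\mathbb{Z}_2$-invariant $n$-manifold whose boundary lies over $t=0$ and $t=\delta$. At $t=0$, $Z\cap\{t=0\}$ is the set of unit kernel vectors of $df$, which carries the fundamental class of the critical locus, and invariants of this piece are forced nonzero since $f$ has a maximum. Following the zero set from $t=0$ to $t=\delta$ should give a cobordism argument. The equivariant (mod $2$) bordism class of $\partial Z\cap\{t=0\}$ modulo $\tau$ should be nontrivial, and it must then be matched by a nonempty $\partial Z\cap\{t=\delta\}$.

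I expect the delicate point to be the transversality and orientation-free bookkeeping at $t=0$ near critical points of $f$. If this fails, I would fall back on Hopf's original trick. It fixes a point $x_0$ where $f_1$ attains its maximum, and restricts attention to geodesics through a neighbourhood of $x_0$. This localizes to a Borsuk--Ulam problem on a single fibre sphere, which can then be used directly.
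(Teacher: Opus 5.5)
The paper only quotes this theorem from Hopf and gives no proof of it, so your proposal has to stand on its own. As written it has a genuine gap.

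\textbf{The main route cannot work.} The cobordism in $SM\times[0,\delta]$ cannot yield a contradiction. Write $g$ for the geodesic flow. Since $\tau_t=g_{-t/2}\circ\tau_0\circ g_{t/2}$, the twisted cylinder is equivariantly $SM\times[0,\delta]$ with $\tau_0$ acting fibrewise. Under this identification $G$ becomes a homotopy of sections of $n\lambda$ over $PM$, running from $df$ to $F_\delta/\delta$. So $Z/\tau$ only shows that the zero sets of two sections of the same bundle are cobordant. The resulting class depends on $n\lambda$ alone, not on $f$, and it vanishes whenever $n\lambda$ has a nowhere-zero section. That is precisely the torus case you flagged, where $(x,v)\mapsto v$ is odd for $\tau_\delta$.

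Already for $n=1$ your claim that the class of $\partial Z\cap\{t=0\}$ is nontrivial is false. There $Z_0/\tau_0$ is the set of critical points of a Morse function on $S^1$, which has even cardinality, yet the theorem holds. The limit $t\to 0$ contributes nothing beyond the homotopy class of an odd section. The maximum of $f_1$ only makes the critical locus nonempty, and nonemptiness is not a bordism invariant.

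\textbf{What is missing.} You need a non-homotopy-invariant input from $f$, applied on a single fibre. Your fallback points in this direction but omits the step that makes it work. The fibre map at $x_0$ is not odd, because $\tau_\delta$ sends $S_{x_0}M$ to vectors based on the geodesic sphere of radius $\delta$. So there is no Borsuk--Ulam problem on $S_{x_0}M$ until the segments are slid along the geodesics through $x_0$ itself (not through a neighbourhood of it). Take $x_0$ a maximum point of $f_1$, and for $w\in S_{x_0}M$ and $0\le s\le \delta/2$ put
\[
\phi_s(w)=f(\gamma_w(\delta-s))-f(\gamma_w(-s))=F_\delta(g_{-s}w).
\]
Under the contradiction hypothesis every $\phi_s$ is nowhere zero. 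Then:
\begin{itemize}
\item $\phi_0$ takes values in $\{y_1\le 0\}$, so its normalization $S^{n-1}\to S^{n-1}$ has degree $0$;
\item $\phi_{\delta/2}(-w)=-\phi_{\delta/2}(w)$, so by Borsuk's theorem its normalization has odd degree;
\item $s\mapsto\phi_s$ is a homotopy between the two, which is a contradiction.
\end{itemize}
This is just your conjugation $\tau_\delta=g_{-\delta/2}\circ\tau_0\circ g_{\delta/2}$ restricted to one fibre. Passing to the quotient $PM$ and to $w_1(L)^n$ is exactly what threw this information away. The argument also works directly for continuous $f$, so no smoothing, genericity or transversality is needed.
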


In \cite{Previous}, we proved that the set of pairs of points satisfying the Hopf theorem for a fixed value of~$\delta$ (under the assumption that there are no $\delta$-conjugate points, that is, there is no pair of points joined by an infinite number of
geodesics of length~$\delta$) is uncountable. Understanding the topological structure of this set was the main driving force for~\cite{Previous1}, where we proved that for a piecewise linear map~$f$ in general position of a triangulation of a $2$-sphere into the Euclidean plane~$\mathbb{R}^2$, there exists a path in the space of $f$-neighbors\footnote{A pair of points $(a,b) \in M \times M$ is called $f$-neighbors if $f(a)=f(b)$.} connecting a pair of `antipodal' points with a pair of identical points. In this setting, `antipodal' and identical points play the roles of `large' and `small' sets, respectively, and the connecting relation is that pairs are $f$-neighbors.

In this work, we first strengthen this result for the case of $n$-dimensional closed triangulable manifolds. This gives a topological version of the Hopf theorem: instead of relying on the Riemannian structure or a metric, we introduce a topological notion of `distant' pairs of points on~$M$. For the case of $n$-spheres, this yields an $n$-dimensional extension of the mountain-climbing lemma. It is worth noting that several authors have dedicated entire papers to different proofs of the original one-dimensional mountain-climbing lemma (see, for example, \cite{Whittaker}, \cite{Huneke}, \cite{Keleti}, \cite{Lopez}), sometimes even without citing each other. Our approach provides a more general perspective on this lemma from the point of view of Borsuk--Ulam type theorems.

The topological version of the Hopf theorem can also be directly applied to extend the Tucker theorem (stated for antipodal triangulations of spheres) and the Lusternik--Schnirelmann theorem. Within our extension framework, for the Lusternik--Schnirelmann theorem, the relevant relation on a pair of points is that they lie almost in the same element of the covering. For the Tucker theorem, a pair of points either has opposite labels or consists of midpoints of complementary edges (see Sections~3 and~4 for a detailed exposition). An extension of Sperner's theorem will be considered in a subsequent paper.

An interesting future direction is to extend the notion of Borsuk–Ulam type spaces (spaces with a free involution that satisfy the Borsuk–Ulam theorem), introduced in~\cite{Musin}, and investigate the class of Hopf-type spaces: spaces with a free involution for which the topological version of the Hopf theorem holds. In subsequent work, we plan to investigate how these two classes of spaces differ.

The paper is organized as follows. First, in Section~2 we provide a topological extension of the Hopf theorem for closed triangulable manifolds, including a corollary for the mountain-climbing lemma. Next, in Sections~3 we present generalizations of the Lusternik--Schnirelmann and Tucker theorems.

\section{Topological extension of the Hopf theorem.}

First, we define a notion of 'distant' points in a topological sense.

\begin{definition}[Distant points]
Let $M$ be a closed topological manifold of dimension $n \ge 1$.
Let $N \subset M \times M$ be an open neighborhood of the diagonal $\Delta_M = \mathrm{diag}(M\times M)$, and set $K = (M \times M)\setminus N$. Suppose there exists a $\mathbb{Z}_2$-equivariant retraction of $N$ onto $\Delta_M$ \footnote{By $\mathbb{Z}_2$-action on $M \times M$ we mean the reflection about the diagonal of $M\times M$.}.  In this case we say that a \emph{distant relation} is given on $M$, and call any pair $(a,b)\in K$ a pair of \emph{distant} points.
\end{definition}

\begin{example}
    Let $M$ be a Riemannian manifold. We call two points $a$ and $b$ in $M$ distant, if they are connected by more than one minimizing geodesic. Retraction of $N$ onto $\Delta_M$ is defined by uniform contraction of corresponding minimizing geodesics to their midpoints. This defines a distant relation on $M$.
\end{example}
Let $M$ be a closed triangulable manifold of dimension $n \geq 1$. \footnote{Recall that a topological manifold is called triangulable if it admits a triangulation. A triangulation of a topological space is a pair $(\mathcal{K}, h)$ of an (abstract) simplicial complex $\mathcal{K}$ together with homeomorphism $h\colon |\mathcal{K}| \to M$ from its geometric realization $|\mathcal{K}|$ to $M$. By abuse of notation, we sometimes identify the triangulation with $\mathcal{K}$ and leave $h$ implicit.} Let $(\mathcal{K}, h)$ be a triangulation of $M$, and let $f\colon |\mathcal{K}| \to \mathbb{R}^n$ be a \emph{piecewise linear} map, that is, a continuous map which is affine on each simplex of a triangulation of $M$. Next, we refer to such maps simply as piecewise linear maps of $M$, and we say 'simplex of $M$', meaning a simplex of the corresponding triangulation, whenever it is clear which triangulation is used. 

By Theorem 2.14 of \cite{Peace-Wise}, there exist simplicial subdivisions $\mathcal{K}'$ of $\mathcal{K}$ and $\mathcal{M}'$ of $f(|\mathcal{K}|)$ such that $f\colon |\mathcal{K}'| \to |\mathcal{M}'|$ is simplicial. For a given piecewise linear map $f$ this defines an induced by $f$ triangulation of $M$, which we denote by $(\mathcal{K}_f, h)$. 

We recall that two points $a$ and $b$ of $M$ are called $f$-neighbors if $f(a) = f(b)$. \footnote{For convenience, we do not require that $a$ and $b$ are distinct.} 
\begin{definition}[Complex of $f$-neighbors]
    Let $M$ be a closed triangulable manifold of dimension $n \geq 1$, and let $f\colon M \to \mathbb{R}^n$ be a piecewise linear map in general position. For any two distinct simplices $\mathcal{A}$ and $\mathcal{B}$ of $\mathcal{K}_f$, whose images coincide, we define two simplices of corresponding $f$-neighbors in $M \times M$, which we denote by $[\mathcal{A}, \mathcal{B}]$ and $[\mathcal{B}, \mathcal{A}]$. We construct the complex of $f$-neighbors $\mathfrak{F}_f$ as a family of simplices, and their faces in a natural way. Notice that $\mathfrak{F}_f$ coincides with the closure (in $M \times M$) of the set of $f$-neighbors realizing nonzero distances (for any metric $d$ on $M$ compatible with the topology of $M$) \footnote{$f$-neighbors $a$ and $b$ realize distance $\delta \geq 0$ if $d(a,b) = \delta$.}. 
\end{definition}

\begin{lemma}
    Let $M$ be a closed triangulable manifold of dimension $n \geq 1$, and let $f\colon M \to \mathbb{R}^n$ be a piecewise linear map in general position. Then the associated complex of $f$-neighbors $\mathfrak{F}_f$ is a pseudomanifold, that is, any $(n-1)$-simplex of $\mathfrak{F}_f$ belongs to exactly two $n$-simplices of $\mathfrak{F}_f$.
\end{lemma}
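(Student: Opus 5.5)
The plan is a local count around each $(n-1)$-simplex of $\mathfrak{F}_f$. Each $(n-1)$-simplex $[\mathcal{A},\mathcal{B}]$ lies on $(n-1)$-simplices $\mathcal{A}\neq\mathcal{B}$ of $\mathcal{K}_f$ with $f(\mathcal{A})=f(\mathcal{B})=\sigma$. By general position, $f$ is nondegenerate on $n$-simplices, and $\sigma$ is an $(n-1)$-simplex of $\mathcal{M}'$. The $n$-simplices of $\mathfrak{F}_f$ containing $[\mathcal{A},\mathcal{B}]$ are exactly the $[\mathcal{A}',\mathcal{B}']$ where $\mathcal{A}'\supset\mathcal{A}$ and $\mathcal{B}'\supset\mathcal{B}$ are $n$-simplices of $\mathcal{K}_f$ with $f(\mathcal{A}')=f(\mathcal{B}')$. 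I will also check the boundary case where $[\mathcal{A},\mathcal{B}]$ arises as a face of some $[\mathcal{A}',\mathcal{B}']$ with $\mathcal{A}=\mathcal{B}$, i.e.\ a simplex lying on the diagonal. Such a simplex is a face of $\mathfrak{F}_f$ because $\mathfrak{F}_f$ is the closure of the off-diagonal neighbor set. So the $(n-1)$-simplices of $\mathfrak{F}_f$ come in two types: off-diagonal ones $[\mathcal{A},\mathcal{B}]$ with $\mathcal{A}\neq\mathcal{B}$, and diagonal ones $[\mathcal{A},\mathcal{A}]$.

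\emph{Off-diagonal case.} Since $M$ is a closed manifold, $\mathcal{A}$ has exactly two adjacent $n$-simplices $\mathcal{A}_1,\mathcal{A}_2$, and $\mathcal{B}$ has exactly two, $\mathcal{B}_1,\mathcal{B}_2$. Their images are $n$-simplices of $\mathcal{M}'$ containing $\sigma$, so each image is one of the two $n$-simplices $\tau_+,\tau_-$ of $\mathcal{M}'$ on either side of $\sigma$ (the hyperplane of $\sigma$ separates $\mathbb{R}^n$ locally). The pair $f(\mathcal{A}_1),f(\mathcal{A}_2)$ is either $\{\tau_+,\tau_-\}$ (a fold-free crossing) or $\{\tau,\tau\}$ (a fold along $\mathcal{A}$); the same holds for $\mathcal{B}$.
\begin{itemize}
\item If neither $\mathcal{A}$ nor $\mathcal{B}$ is a fold, there are exactly two matching pairs, one on each side.
\item If $\mathcal{A}$ is a fold onto $\tau_+$ and $\mathcal{B}$ is not, there are again exactly two matches: $(\mathcal{A}_1,\mathcal{B}_+)$ and $(\mathcal{A}_2,\mathcal{B}_+)$.
\item If both are folds onto the same side, there would be four matches; if they fold onto opposite sides, there would be zero.
\end{itemize}

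\emph{Excluding double folds.} The real content of the general position hypothesis is to exclude the last two configurations. The hypothesis must forbid two distinct fold $(n-1)$-simplices from having the same image $\sigma$. Generically, fold images (the critical value set) cross transversally and do not overlap in codimension one. I would state this explicitly as part of general position: vertex images in general position make any two distinct $(n-1)$-simplices with a common image impossible unless one of them is a non-fold, and in fact the images of $(n-1)$-faces not sharing the same vertices are distinct except in the structured way coming from the subdivision. I expect this to be the main obstacle, because general position has to be interpreted carefully after passing to the subdivision $\mathcal{K}_f$. In the subdivision, new simplices of $\mathcal{K}_f$ share images by construction, so what general position actually constrains is the original map. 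My plan is to argue on the original triangulation $\mathcal{K}$: a fold occurs along an original $(n-1)$-simplex, and two fold codimension-one pieces mapping onto a common $(n-1)$-dimensional set would force the affine hulls of two original $(n-1)$-simplices to coincide. That is a non-generic coincidence, violating general position of the vertex images.

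\emph{Diagonal case.} For $[\mathcal{A},\mathcal{A}]$, the incident $n$-simplices are $[\mathcal{A}_i,\mathcal{A}_j]$ with $i\neq j$ and $f(\mathcal{A}_i)=f(\mathcal{A}_j)$. This happens only when $\mathcal{A}$ is a fold, and then gives exactly the two simplices $[\mathcal{A}_1,\mathcal{A}_2]$ and $[\mathcal{A}_2,\mathcal{A}_1]$. Non-fold diagonal simplices are not in $\mathfrak{F}_f$ at all. Combining the two cases gives exactly two $n$-simplices through each $(n-1)$-simplex.
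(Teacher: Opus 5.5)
Your proposal is correct and follows essentially the same local case analysis as the paper: a diagonal $(n-1)$-simplex coming from a fold, an off-diagonal simplex with no folds, an off-diagonal simplex with exactly one fold, and general position ruling out the double-fold configurations. Your version is a bit more complete in two respects. You start from an arbitrary $(n-1)$-simplex, so you also rule out the case of zero incident $n$-simplices, whereas the paper starts from a face of an existing $n$-simplex. You also explain why general position of the vertex images on the original triangulation excludes two folds with the same image, which the paper only asserts.
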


\begin{proof}
    Let $[\mathcal{A}, \mathcal{B}]$ be an $n$-simplex in $\mathfrak{F}_f$ and let $\mathcal{F}$ be an $(n-1)$-face of $[\mathcal{A}, \mathcal{B}]$. There are the $(n-1)$-faces $\mathcal{F}_{\mathcal{A}}$ and $\mathcal{F}_{\mathcal{B}}$ in simplices $\mathcal{A}$ and $\mathcal{B}$ respectively such that $\mathcal{F} = [\mathcal{F}_{\mathcal{A}}, \mathcal{F}_{\mathcal{B}}]$. We have two cases:
    \begin{enumerate}
        \item If $\mathcal{F}_{\mathcal{A}}$ coincides with $\mathcal{F}_{\mathcal{B}}$, then simplices $[\mathcal{A}, \mathcal{B}]$ and $[\mathcal{B}, \mathcal{A}]$ share the face $\mathcal{F}$. 
        \item If $\mathcal{F}_{\mathcal{A}}$ and  $\mathcal{F}_{\mathcal{B}}$ are distinct, denote by $\mathcal{A}_1$ and $\mathcal{B}_1$ the $n$-simplices adjacent to faces $\mathcal{F}_{\mathcal{A}}$ and  $\mathcal{F}_{\mathcal{B}}$ respectively, besides $\mathcal{A}$ and $\mathcal{B}$. It follows that simplices $\mathcal{A}, \mathcal{A}_1, \mathcal{B}, \mathcal{B}_1$ are all distinct. 
        Since $f$ is in general position, we have two subcases:

        \begin{enumerate}[label=(\alph*)]
            \item `no foldings': $f(\mathcal{A}_1)$ coincides with $f(\mathcal{B}_1)$, but not with $f(\mathcal{A})$ and $f(\mathcal{B})$. In this subcase simplices $[\mathcal{A}, \mathcal{B}]$ and $[\mathcal{A}_1, \mathcal{B}_1]$ share the face $\mathcal{F}$. 
            \item `one folding': either $f(\mathcal{A}_1)$ or $f(\mathcal{B}_1)$ (but not both) coincides with $f(\mathcal{A})$ and $f(\mathcal{B})$. In this subcase either simplices $[\mathcal{A}, \mathcal{B}]$ and $[\mathcal{A}_1, \mathcal{B}]$, or simplices $[\mathcal{A}, \mathcal{B}]$ and $[\mathcal{A}, \mathcal{B}_1]$, share the face $\mathcal{F}$.
        \end{enumerate}
    \end{enumerate}

    We observe that in each case only the specified pair of simplices share the face $\mathcal{F}$. 
    
\end{proof}

\begin{theorem}
     Let $M$ be a closed triangulable manifold of dimension $n \geq 1$. And let $f\colon M \to \mathbb{R}^n$ be a piecewise linear map in general position. Then for any distant relation on $M$, there exists a path of $f$-neighbors in $\mathfrak{F}_f$ that connects a pair of distant points with a pair of identical points. 
\end{theorem}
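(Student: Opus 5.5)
Suppose, for contradiction, that no connected component of $\mathfrak{F}_f$ meets both $K$ and the diagonal $\Delta_M$. The idea is to work with the union $C$ of those components of $\mathfrak{F}_f$ that meet $\Delta_M$, and to use the $\mathbb{Z}_2$-equivariant retraction to build a forbidden map.

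\textbf{Properties of $C$.}
\begin{itemize}
\item Each component of the pseudomanifold $\mathfrak{F}_f$ (Lemma~1) is invariant under, or swapped by, the reflection $\tau(a,b)=(b,a)$. Hence $C$ is $\tau$-invariant.
\item Under our assumption $C\subset N$. Since $C$ is compact, it lies in a closed subset of $N$ where the retraction $r\colon N\to\Delta_M$ is defined and equivariant.
\item $C\cap\Delta_M$ is the set of identical pairs $(x,x)$ lying in folds of $f$ (the codimension-one simplices where two adjacent $n$-simplices have the same image, case 1 of Lemma~1). It is a closed $(n-1)$-dimensional subcomplex.
\end{itemize}

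\textbf{Homological mismatch.} The aim is to find a mod $2$ class that $C$ must carry but $N$ cannot.

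Consider the $\mathbb{Z}_2$-equivariant map $\Phi\colon M\times M\setminus\Delta_M\to S^{n-1}$ given by $(a,b)\mapsto (f(a)-f(b))/|f(a)-f(b)|$ wherever $f(a)\ne f(b)$. It satisfies $\Phi(b,a)=-\Phi(a,b)$. By a standard degree argument (Borsuk--Ulam/Hopf), the zero set $\mathfrak{F}_f$ of $F(a,b)=f(a)-f(b)$ is, away from $\Delta_M$, a mod $2$ cycle relative to $\Delta_M$. Its $\tau$-quotient, taken relative to the diagonal, represents the Poincaré dual of the $n$-th power of the Stiefel--Whitney class $w_1$ of the line bundle associated with the double cover $M\times M\setminus\Delta_M\to (M\times M\setminus\Delta_M)/\tau$. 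This class is dual to $w_1^n$, i.e.\ to the Euler class of $n$ copies of the sign representation.

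Now $C$ is a union of components, so $C$ is also a mod $2$ relative cycle, with boundary $C\cap\Delta_M$. I will compute its relative class via a local computation near the diagonal, using the linear approximation $f(a)-f(b)\approx Df\,(a-b)$, together with the sum over $n$-simplices of the degree of $\Phi$ on small spheres around the diagonal.

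Via the retraction, $C/\tau$ maps into $N/\tau$, which deformation-retracts (equivariantly) onto $\Delta_M$ together with its normal sphere bundle quotient, namely the projectivized tangent bundle $P(TM)$. Then:
\begin{itemize}
\item In $P(TM)$ the class $w_1^n$ of the tautological line bundle is nonzero: its pairing with the fiber $\mathbb{RP}^{n-1}$ is $w_1^{n-1}$ on a fiber, and the relevant evaluation gives $1$.
\item On the other hand, the component structure forces the class carried by $C$ to be zero. The complement pieces, i.e.\ the components that avoid $\Delta_M$, lie in $M\times M\setminus\Delta_M$. Then $\Phi$ restricted to a $\tau$-invariant closed pseudomanifold gives degree parity constraints. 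Combined with $C\subset N$, the equivariant map $r$ composed with $\Phi$ would produce an odd map from an $(n-1)$-dimensional $\mathbb{Z}_2$-cycle around folds to $S^{n-1}$ with vanishing characteristic class, contradicting Borsuk--Ulam.
\end{itemize}

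\textbf{Main obstacle.} The hardest step is this final bookkeeping: showing precisely that a component of $\mathfrak{F}_f$ through the diagonal, confined to $N$, yields a contradiction. Equivalently, one must show that the total fold boundary $C\cap\Delta_M$ has nonzero mod $2$ class linked with distant pairs.

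My first attempt would be the most concrete version. Take a small $\tau$-invariant tubular sphere bundle $S$ around $\Delta_M$. The intersection $\mathfrak{F}_f\cap S$ is an $(n-1)$-cycle whose $\tau$-quotient pairs to $1$ mod $2$ with $w_1^{n-1}$. This is Hopf's theorem locally: for small $\delta$, the count of $f$-neighbors at distance $\delta$ is odd on the quotient. Then, if $C\subset N$, the cobordism $C\setminus(\text{tube})$ pushed by $r$ into $S$ would make this cycle null-homologous in $S/\tau\simeq P(TM)$, which is a contradiction.
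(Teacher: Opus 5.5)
There is a genuine gap: the class your contradiction rests on is identically zero.

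In the ``concrete version'' you claim that $(\mathfrak{F}_f\cap S)/\tau$ pairs to $1$ with $w_1^{n-1}$, i.e.\ that the number of unordered $f$-neighbor pairs at a small distance $\delta$ is odd. Hopf's theorem gives existence, not an odd count, and the claim is false already for $n=1$. On a circle these pairs sit one near each local extremum of $f$, and there are evenly many extrema.

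In general, let $E$ be the complement of the open tube bounded by $S$. Since $\mathfrak{F}_f$ is a mod~$2$ cycle (Lemma~1) and $\tau$ is free on $E$, the quotient chain $(\mathfrak{F}_f\cap E)/\tau$ has boundary $(\mathfrak{F}_f\cap S)/\tau$ in $E/\tau$. The class $w_1$ of the double cover $E\to E/\tau$ restricts on $S/\tau\cong P(TM)$ to the tautological class. Hence the pairing is always $0$, whatever $N$ is, and no contradiction can come from it.

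The surrounding steps have further problems:
\begin{itemize}
\item The hypothesis only gives a retraction $N\to\Delta_M$. Its values lie in $\Delta_M$, where $\Phi$ is undefined (as it is on $\mathfrak{F}_f$ itself), not in $S$.
\item The hypothesis says nothing about $N\setminus\Delta_M$ looking like a sphere bundle. So ``$N/\tau$ deformation-retracts onto $P(TM)$'' and ``pushed by $r$ into $S$'' are unsupported.
\item The claim $w_1^n\neq 0$ in $P(TM)$ is false in general: the relation $x^n+w_1(TM)x^{n-1}+\dots+w_n(TM)=0$ forces $x^n=0$ when $M$ has trivial Stiefel--Whitney classes. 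Separately, a degree-$n$ class cannot be paired with the $(n-1)$-dimensional fibre.
\end{itemize}

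The missing idea is that the obstruction lives in top degree, on one well-chosen component, not near the diagonal and not on the union $C$.
\begin{itemize}
\item Take adjacent $n$-simplices $\mathcal{A},\mathcal{B}$ of $\mathcal{K}_f$ with $f(\mathcal{A})=f(\mathcal{B})$ and no other $n$-simplex having this image. General position provides such a simple fold, e.g.\ over a generic point of the frontier of $f(M)$. Let $\mathcal{L}$ be the component of $\mathfrak{F}_f$ containing $[\mathcal{A},\mathcal{B}]$.
\item A generic point of $\mathcal{A}$ has exactly one preimage under $pr\colon\mathfrak{F}_f\to M$, and it lies in $\mathcal{L}$. So $pr|_{\mathcal{L}}$ has odd degree.
\item $\mathcal{L}$ meets $\Delta_M$ and contains $[\mathcal{B},\mathcal{A}]$ (case~1 of Lemma~1), so it is $\tau$-invariant.
\item If $\mathcal{L}\subset N$, write the equivariant retraction as $(a,b)\mapsto(m(a,b),m(a,b))$ with $m\circ\tau=m$. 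Deforming $\mathcal{L}$ onto $\Delta_M$ turns $pr|_{\mathcal{L}}$ into $m|_{\mathcal{L}}$. Its mod~$2$ degree is even because $\tau$ pairs off the top simplices of $\mathcal{L}$, which is a contradiction.
\end{itemize}
This is the paper's proof.

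It uses a homotopy between the inclusion of $\mathcal{L}$ and its retraction, which the paper asserts but a bare retraction does not supply. For example, take $M=S^1$ and let $K$ be two small discs off the diagonal exchanged by $\tau$. Then $N/\tau$ is a M\"obius band minus a disc, which does retract onto its boundary circle. Yet $\mathfrak{F}_f$ of a height function misses $K$ if the discs are centred at a pair with distinct $f$-values. Your step ``pushed by $r$ into $S$'' needs even more than a homotopy, namely that $N\setminus\Delta_M$ deforms into $S$.
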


\begin{proof}
    Let $pr \colon M \times M \to M$ be a projection. Observe that there exists a component $\mathcal{L}$ of $\mathfrak{F}_f$ such that $pr \colon \mathcal{L} \to M$ has degree $1$ and $\mathcal{L}$ intersects the diagonal of $M \times M$. Indeed, since $f$ is in general position there is a pair of adjacent $n$-simplices $\mathcal{A}$ and $\mathcal{B}$ in the induced by $f$ triangulation of $M$ such that $f(\mathcal{A}) = f(\mathcal{B})$ (`folding') and there are no other $n$-simplices in $M$ whose images coincide with $f(\mathcal{A})$ and $f(\mathcal{B})$. Thus we can take $\mathcal{L}$ as a component of $\mathcal{F}_f$ that contains $[\mathcal{A}, \mathcal{B}]$. 

    Suppose next that $\mathcal{L}$ doesn't contain a pair of distant points. Then there is a homotopy $r \colon \mathcal{L} \times [0,1] \to M \times M$, which takes $\mathcal{L}$ onto the diagonal $\mathrm{diag}(M\times M)$. This defines the homotopy $pr\circ r \colon \mathcal{L} \times [0,1] \to M$. Since $pr \circ r(1)$ has an even degree, this is a contradiction. 
    
\end{proof}

We now prove our main theorem by passing to the limit. 

\begin{theorem}
     Let $M$ be a closed triangulable manifold of dimension $n \geq 1$. And let $f\colon M \to \mathbb{R}^n$ be a continuous map. Then for any distant relation on $M$, there exists a connected component of $f$-neighbors that contains both a pair of distant points and a pair of identical points. 
\end{theorem}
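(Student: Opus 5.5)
The plan is to approximate $f$ by piecewise linear maps in general position, apply the previous theorem to each approximation, and pass to a limit using compactness of the hyperspace of closed subsets of $M\times M$ (Hausdorff metric, or Kuratowski upper/lower limits).

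\textbf{Setup.} Fix a metric $d$ on $M$ compatible with its topology. Fix a triangulation $(\mathcal{K},h)$. Take a sequence of piecewise linear maps $f_k\colon M\to\mathbb{R}^n$ in general position with $\sup_{x}|f_k(x)-f(x)|<1/k$. To obtain them, take simplicial approximations on finer subdivisions and perturb the vertex values slightly.

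\textbf{Step 1 (connected sets at each stage).} For each $k$, the previous theorem gives a path in $\mathfrak{F}_{f_k}$ connecting a distant pair $(a_k,b_k)\in K$ with a diagonal pair $(c_k,c_k)$. Let $C_k$ be its image, a compact connected subset of $M\times M$. Every $(x,y)\in C_k$ satisfies $|f_k(x)-f_k(y)|=0$, hence $|f(x)-f(y)|<2/k$.

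\textbf{Step 2 (limit).} Pass to a subsequence along which:
\begin{itemize}
\item $C_k$ converges in the Hausdorff metric to a compact set $C$;
\item $(a_k,b_k)\to(a,b)$;
\item $c_k\to c$.
\end{itemize}
The Hausdorff limit of compact connected sets in a compact metric space is connected. Since $K$ is closed, $(a,b)\in K$, so $(a,b)$ is distant. Also $(c,c)$ lies on the diagonal. Both points belong to $C$. Every point of $C$ is a limit of points $(x_k,y_k)\in C_k$, and $|f(x_k)-f(y_k)|<2/k$, so by continuity $f(x)=f(y)$. Hence $C$ is contained in the closed set $\Phi_f=\{(x,y): f(x)=f(y)\}$ of $f$-neighbors. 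Since $C$ is connected, it lies in a single connected component of $\Phi_f$, and that component contains both the distant pair $(a,b)$ and the identical pair $(c,c)$. This is the conclusion of the theorem.

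\textbf{Main obstacle.} The delicate step is Step 1's approximation: producing piecewise linear maps $f_k$ that are genuinely in general position in the sense used in the previous theorem and in the pseudomanifold lemma. The requirements are no coincidences of images beyond the generic ones, and the existence of an isolated folding. The approach is to start from a simplicial approximation on a fine subdivision of $\mathcal{K}$ and perturb vertex images generically; general position is a dense open condition on vertex values. One must make sure a folding exists. If necessary, introduce one by a small local modification near a single vertex, keeping the sup-distance below $1/k$.

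The remaining facts are standard and should be verified or cited:
\begin{itemize}
\item connectedness is preserved under Hausdorff limits of continua;
\item the components of the compact set $\Phi_f$ behave well, so that any connected subset lies in one component.
\end{itemize}
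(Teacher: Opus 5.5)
Your proof is correct and follows the paper's route. You approximate $f$ uniformly by general-position PL maps (vertex interpolation on fine subdivisions plus a small perturbation, as in the paper's footnote; you need not add a folding by hand, since the previous theorem assumes only general position), apply the previous theorem to each $f_k$, and pass to a limit by compactness. The only difference is the limit step: your Hausdorff-convergent subsequence, along which the distant and diagonal endpoints also converge, together with the fact that Hausdorff limits of continua are continua, is a cleaner version of the paper's argument via a connected component of the set of all accumulation points. The paper's footnote tacitly needs exactly such a common subsequence.
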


\begin{proof}
    Let $(\mathcal{K}, h)$ be a triangulation of $M$. Construct a refining sequence of barycentric subdivisions $\{\mathcal{K}_n\}_{n=1}^{\infty}$ and piecewise linear maps in general position $\{f_{n} \colon \mathcal{K}_n \to \mathbb{R}^n\}_{n=1}^{\infty}$, which converges to $f$. \footnote{For instance, let $f_n$ coincide with $f$ on the vertices of $\mathcal{K}_n$, and extend it affinely to $\mathcal{K}_n$. To make $f_n$ a map in general position, a small perturbation may be needed.} By Theorem 1, we obtain a sequence of paths of $f_n$-neighbors in $M \times M$, which we denote by $\{\gamma_n \}_{n=1}^{\infty}$. Denote by $\Gamma$ the limit set, that is, the set of all accumulation points of the union of paths $\gamma_n$. Let $\Gamma_1$ be a connected component of $\Gamma$. It's straightforward to check that $\Gamma_1$ contains limit points of sequences $\{\gamma_n(t)\}_{n=1}^{\infty}$ for any $t \in [0,1]$. \footnote{Suppose $\Gamma_1$ contains a limit point $a$ of the sequence $\{\gamma_n(t_0)\}_{n=1}^{\infty}$, but not a limit point $b$ of the sequence $\{\gamma_n(t_1)\}_{n=1}^{\infty}$ for some $t_0, t_1 \in [0,1]$. Let $\mathcal{O}_1$ be an open neighborhood of $\Gamma_1$ that separates it from $\Gamma \setminus \Gamma_1$. Then one could find a sequence of paths $\{\gamma_k\}_{k=1}^{\infty}$, with endpoints converging to $a$ and $b$ respectively, where $b$ lies outside of $\mathcal{O}_1$. This would yield a limit point on the boundary of $\mathcal{O}_1$, which is a contradiction.} This completes the proof. 
    
\end{proof}

Theorem 2 gives a new proof (and generalization) of the mountain-climbing lemma, which originally states as follows: \footnote{There is a more general version of this lemma for continuous functions $f_1$ and $f_2$ without 'valleys'. However this formulation doesn't immediately follows from our results, since we can not guarantee the existence of a path.}:

\begin{lemma}[Mountain-climbing lemma]
    Let $f_1, f_2 \colon [0,1] \to [0,1]$ be two continuous piecewise linear functions with $f_1(0) = f_2(0) = 0$ and $f_1(1) = f_2(1) = 1$. Then there exist two continuous piecewise linear functions $g_1, g_2\colon [0,1] \to [0,1]$ such that $g_1(0) = g_2(0) = 0$, $g_1(1) = g_2(1) = 1$, and $f_1(g_1(t)) = f_2(g_2(t))$ for each $t \in [0,1]$.
\end{lemma}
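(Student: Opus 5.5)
We realize the mountain-climbing lemma as the case $n=1$ of the complex-of-neighbours construction, with $M=S^1$, and then extract the required pair of paths from a component of $\mathfrak{F}_f$.

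\emph{Setting up the circle.} Glue two copies of $[0,1]$ at their endpoints to form $M=S^1$. One copy is traversed forward with values $f_1$, the other backward with values $f_2$, which gives a continuous piecewise linear map $f\colon S^1\to\mathbb{R}$. The points $0$ and $1$ of the two copies are identified, so $f$ attains $0$ and $1$ exactly at these two gluing points $p_0,p_1$, provided we first reduce to the case where $f_i^{-1}(0)=\{0\}$ and $f_i^{-1}(1)=\{1\}$. Then $p_0,p_1$ are the global minimum and maximum of $f$.

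\emph{Distant relation and general position.} Declare a pair $(a,b)$ distant when $a$ and $b$ lie in different copies, in the sense made precise below. Concretely, let $K$ be the closed set of pairs $(a,b)$ with $a$ in the first closed arc, $b$ in the second, or vice versa, after removing small neighbourhoods of $(p_0,p_0)$ and $(p_1,p_1)$. One must check that the complement $N$ equivariantly retracts onto the diagonal; this is the only delicate point in the setup. A cleaner route avoids that verification: work directly with $\mathfrak{F}_f$, which by Lemma 2 is a $1$-dimensional pseudomanifold, hence a disjoint union of circles, after perturbing $f$ into general position without moving $p_0,p_1$.

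\emph{The path.} In $\mathfrak{F}_f$, consider the component $\mathcal{L}$ through the pair $(x,y)$, where $x$ lies on the first copy just above $p_0$ and $y$ on the second copy at the same height. Near $p_0$ there is a unique such pair at each small level, since $p_0$ is a strict local minimum. Follow $\mathcal{L}$ starting from $(p_0,p_0)$ along the branch with one point in each copy. Along this branch the two points stay in different copies unless both reach a gluing point simultaneously. That can only happen at $(p_0,p_0)$ or $(p_1,p_1)$, because only there do the copies meet. The branch is a $1$-manifold arc inside a compact set, so it either returns to $(p_0,p_0)$ or reaches $(p_1,p_1)$.

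\emph{Excluding a return to $(p_0,p_0)$.} This is the main obstacle. The leaving branch near $(p_0,p_0)$ is unique, because $p_0$ is a strict minimum with exactly two arcs emanating. A return would therefore require $\mathcal{L}$ to be a circle passing twice through the same $1$-dimensional neighbourhood, which contradicts the pseudomanifold property. Alternatively, use the parity argument in the proof of Theorem 1: the swap involution acts on the component, and the count of fold points must be matched.

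\emph{Conclusion.} The arc therefore ends at $(p_1,p_1)$. Parametrize it by $t\in[0,1]$, taking the first coordinate as $g_1$ (reading positions in the first copy) and the second as $g_2$. These are piecewise linear, start at $0$, end at $1$, and satisfy $f_1\circ g_1=f_2\circ g_2$ by construction. Removing the perturbation needs no limit argument, since the perturbation can be chosen to keep $f_1,f_2$ unchanged on the relevant level sets.

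The initial reduction to $f_i^{-1}(0)=\{0\}$ and $f_i^{-1}(1)=\{1\}$ is handled as follows. If the $f_i$ touch $0$ or $1$ at interior points, cut at the last zero and the first one and prepend or append constant segments.
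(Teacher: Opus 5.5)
For $f$ in general position your argument is correct, and it takes a different route from the paper. The paper turns the mountain into a circle and invokes its general-position theorem, i.e.\ the degree argument with an equivariant retraction onto the diagonal. You need only the pseudomanifold lemma and three observations. The two edges of $\mathfrak{F}_f$ at $(p_0,p_0)$ have types $(1,2)$ and $(2,1)$. The type is constant on any arc of $\mathcal{L}$ avoiding $(p_0,p_0)$ and $(p_1,p_1)$. A traversal of the circle $\mathcal{L}$ that leaves $(p_0,p_0)$ along one edge must come back along the other. This argument is more elementary in dimension one, and the distant-relation detour is unnecessary. The gap is the non-generic case, which the lemma explicitly allows: plateaus, extrema of $f_1$ and $f_2$ at a common height, and interior zeros or ones.

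You claim that removing the perturbation ``needs no limit argument, since the perturbation can be chosen to keep $f_1,f_2$ unchanged on the relevant level sets''; this fails. Because $g_i(0)=0$ and $g_i(1)=1$, each $g_i$ is onto $[0,1]$, so every point where you altered $f_i$ is visited. Hence the identity $\tilde f_1\circ g_1=\tilde f_2\circ g_2$ for the perturbed functions does not imply $f_1\circ g_1=f_2\circ g_2$. For example, let $f_1=f_2$ rise to a peak of height $1/2$ at $s=1/4$, descend to $1/4$ at $s=1/2$, then rise linearly to $1$. General position forces you to separate the two peaks, say by raising the first to $1/2+\varepsilon$ and leaving $f_2$ alone. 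Then at any $t$ with $g_1(t)=1/4$, the second climber is on its final ascent at height $1/2+\varepsilon$, so $f_1(g_1(t))=1/2\neq f_2(g_2(t))$.

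What you need is a genuine limit argument, which is what the paper's appeal to its limit theorem supplies. Choose generic $f_i^{(k)}\to f_i$ that still attain $0$ and $1$ only at the endpoints, e.g.\ $(1-\frac{1}{k})f_i(s)+\frac{1}{k}s$ with tiny generic changes at interior vertices. Your argument then gives arcs $\Gamma_k\subset[0,1]^2$ from $(0,0)$ to $(1,1)$. A Hausdorff-convergent subsequence has as its limit a compact connected subset of the polyhedron $Z=\{(s,t): f_1(s)=f_2(t)\}$ containing both corners. Joining the corners by a piecewise linear path inside that component of $Z$ gives $g_1,g_2$.

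This also makes your preliminary reduction unnecessary, which matters because as stated it does not work. If $f_1$ has an interior zero $z_1$, the first climber must still cross $[0,z_1]$, where $f_1$ is in general positive. So the second climber cannot wait at $0$ on a constant segment; matching that excursion is itself a mountain-climbing problem.
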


\begin{proof}
    Define a 'mountain' as the graph of a function $F\colon[0,2] \to \mathbb{R}$, formed by $f_1$ followed by the reflected copy of $f_2$ (with respect to the $x$-axis). Taking the reflection of $F$ (with respect to the $y$-axis) we obtain a circle. Study the projection onto the $y$-axis. In case of a 'mountain' in general position (when there are no valleys or highest peaks at the same level), Lemma 2 immediately follows from Theorem 1, otherwise from Theorem 2. 
\end{proof}

From Theorem 3 we obtain the following $n$-dimensional extension of mountain-climbing lemma. 

\begin{theorem}[Mountain-climbing on a sphere]
    Let $f_1, ..., f_n$ be continuous functions ('climatic time-independent conditions') on the standard Euclidean sphere $\mathbb{S}^n$. Then there exists a pair of antipodal points with identical values such that 'travelers', starting from those points, can move and meet
    while, at each moment of their journey, experiencing matching 'climatic conditions' up to an arbitrarily small constant.
\end{theorem}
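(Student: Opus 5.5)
We apply Theorem 3 to $M=\mathbb{S}^n$ with the map $f=(f_1,\dots,f_n)\colon \mathbb{S}^n\to\mathbb{R}^n$ and a distant relation adapted to antipodes. We then convert the resulting connected component of $f$-neighbors into $\varepsilon$-paths of travelers.

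\textbf{Step 1: a distant relation whose distant pairs are antipodal.} For fixed small $\eta>0$, the natural choice is $N=\{(a,b): a\neq -b\}$. Then $K=\{(a,-a)\}$, and the retraction would send $(a,b)$ to the normalized midpoint $(a+b)/|a+b|$, which is $\mathbb{Z}_2$-equivariant. However, $N$ must be open with $K$ closed, and the midpoint map is only defined off $K$. So $N$ works as an open set, and the retraction $(a,b)\mapsto (m,m)$ with $m=(a+b)/|a+b|$ is continuous on $N$, equivariant under swapping, and fixes the diagonal. Hence this is a legitimate distant relation (the Riemannian example of the paper, since antipodes are exactly the pairs with more than one minimizing geodesic). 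By Theorem 3 there is a connected component $C$ of the set $\{(a,b): f(a)=f(b)\}\subset \mathbb{S}^n\times\mathbb{S}^n$ containing some antipodal pair $(x,-x)$ with $f(x)=f(-x)$, and some diagonal pair $(y,y)$.

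\textbf{Step 2: from a connected compact set to $\varepsilon$-chains.} The set $C$ is a connected component of a compact set, hence a continuum, but it need not be path connected. This is why the statement says ``up to an arbitrarily small constant.'' Fix $\varepsilon>0$. Since $C$ is connected, for every $\delta>0$ there is a $\delta$-chain $(a_0,b_0)=(x,-x),(a_1,b_1),\dots,(a_k,b_k)=(y,y)$ in $C$ with consecutive points at distance $<\delta$ in $\mathbb{S}^n\times\mathbb{S}^n$. This is the standard fact that the $\delta$-chain equivalence classes are open and closed. The travelers move along the great-circle arcs from $a_i$ to $a_{i+1}$ and from $b_i$ to $b_{i+1}$ simultaneously, with the arcs parametrized proportionally. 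By uniform continuity of $f$ on $\mathbb{S}^n$, choose $\delta$ so that $|u-v|<\delta$ implies $|f(u)-f(v)|<\varepsilon/2$. Along each segment, both travelers stay within $\delta$ of $a_i$ and $b_i$ respectively. Since $f(a_i)=f(b_i)$, we get $|f(\alpha(t))-f(\beta(t))|<\varepsilon$ at every moment. At the end both are at $y$, so they meet.

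\textbf{Main obstacle.} Step 1 is the delicate point: we must verify the definition of distant relation precisely, namely that $N$ is an open neighborhood of the diagonal and that the retraction is continuous and equivariant. The midpoint normalization handles this. The theorem's use of Theorem 3 also implicitly requires the limiting argument behind it, which we take as given. Step 2 is routine; the only subtlety is that connectedness, rather than path connectedness, forces the $\varepsilon$ slack.
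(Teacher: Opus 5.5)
Your proposal is correct and follows the paper's route. The paper only says the result follows from Theorem 3, applied to $f=(f_1,\dots,f_n)$ with the antipodal distant relation of the Riemannian example; your midpoint retraction and the $\delta$-chain/uniform-continuity interpolation supply the details it leaves implicit, with the antipodal pair correctly fixed before $\varepsilon$ is chosen (the parameter $\eta$ in Step 1 is unused and can be deleted).
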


\section{Applications.}

\begin{theorem}[Generalization of Lusternik--Schnirelmann theorem]
    Let $M$ be a closed triangulable manifold of dimension $n \geq 1$. Suppose $M$ is covered by $n + 1$ closed sets $A_1, \dots, A_{n+1}$. Then for arbitrarily small open neighborhoods of $A_k$, which we denote by $\mathcal{E}_k$, and for any distant relation on $M$, there exists a pair of distant points (called 'travelers') such that travelers, starting from those points, can meet, along the way remaining in the same neighborhoods at each moment of time. 
    That is, there exists a path $\gamma \colon [0,1] \to M \times M$ connecting a pair of distant points to a pair of identical points such that for each $t \in [0,1]$, the points in $\gamma(t)$ belong to the same open sets $\mathcal{E}_k$ (where $k$ may change along the path).
\end{theorem}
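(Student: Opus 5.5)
The plan is to reduce the statement to Theorem 2 (the piecewise linear version with a path), in the spirit of the classical derivation of Lusternik--Schnirelmann from Borsuk--Ulam. We build a map $f\colon M\to\mathbb{R}^n$ out of the covering so that any two $f$-neighbors automatically lie in a common neighborhood $\mathcal{E}_k$.

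\textbf{Step 1: shrink the neighborhoods and fix a triangulation.} Since $M$ is compact and the $A_k$ are closed, we can pick open sets $\mathcal{E}'_k$ with $A_k\subset \mathcal{E}'_k\subset \overline{\mathcal{E}'_k}\subset \mathcal{E}_k$. We fix a triangulation $\mathcal{K}$ of $M$ that is fine enough (by barycentric subdivision and a Lebesgue number argument) that every closed simplex meeting $\overline{\mathcal{E}'_k}$ lies in $\mathcal{E}_k$.

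\textbf{Step 2: construct the map.} Let $d_k(x)=\operatorname{dist}(x,A_k)$ for a fixed compatible metric; these are continuous. Set $g(x)=(d_1(x),\dots,d_{n+1}(x))$. We do not map to $\mathbb{R}^{n+1}$ directly. Instead we use the $n$ coordinates $\phi(x)=(d_1(x),\dots,d_n(x))\in\mathbb{R}^n$, which is the standard Lusternik--Schnirelmann trick.

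Now suppose $\phi(a)=\phi(b)$. If $a\in A_k$ for some $k\le n$, then $d_k(b)=d_k(a)=0$, so $b\in A_k$ as well. Otherwise neither point lies in $A_1,\dots,A_n$, and since the $A_k$ cover $M$, both lie in $A_{n+1}$. Thus exact $\phi$-neighbors always share a set $A_k$.

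Next we replace $\phi$ by a piecewise linear approximation $f$ in general position on a subdivision of $\mathcal{K}$, uniformly within a small $\varepsilon$. The implication then weakens to the following: if $f(a)=f(b)$ then $|d_k(a)-d_k(b)|<2\varepsilon$ for all $k\le n$.

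\textbf{Step 3: the robust version of the dichotomy.} This is the main obstacle. Near-equality of the distance vectors must still force a common neighborhood $\mathcal{E}_k$. By compactness there is $\eta>0$ such that every point $x$ has $d_k(x)<\eta$ only if $x\in\mathcal{E}'_k$; moreover, every $x$ outside $\bigcup_{k\le n}\{d_k<\eta\}$ lies in $\mathcal{E}'_{n+1}$.

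I would first try to argue as follows. Suppose $f(a)=f(b)$ and some $d_k(a)<\eta/2$ with $k\le n$. Then $d_k(b)<\eta/2+2\varepsilon<\eta$, so both points lie in $\mathcal{E}'_k$. Otherwise all $d_k(a)\ge\eta/2$, and hence all $d_k(b)\ge\eta/2-2\varepsilon$. This requires a second compactness constant: choose $\eta$ so that $\{x: d_k(x)\ge\eta/3\ \forall k\le n\}\subset\mathcal{E}'_{n+1}$, which is possible because that set decreases to a compact subset of $A_{n+1}\subset\mathcal{E}'_{n+1}$ as $\eta\to0$. With $\varepsilon<\eta/12$, both points then lie in $\mathcal{E}'_{n+1}$.

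\textbf{Step 4: apply Theorem 2.} Theorem 2 gives a path $\gamma$ in $\mathfrak{F}_f$ from a pair of distant points to a pair of identical points. Every point $\gamma(t)$ is a pair of $f$-neighbors, so by Step 3 both coordinates lie in a common $\mathcal{E}'_k\subset\mathcal{E}_k$, with $k$ possibly varying with $t$. This is exactly the conclusion of the statement.

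Note that we use the path version (Theorem 2) rather than the limit version, which is why the conclusion holds only up to the neighborhoods $\mathcal{E}_k$. The fineness requirement in Step 1 is then not even needed, since Step 3 works pointwise.
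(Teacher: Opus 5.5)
Your argument is correct, but it takes a different route from the paper.

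\textbf{The paper's route.} The paper uses the symmetric map $\psi=(d_1,\dots,d_{n+1})$ composed with orthogonal projection onto the hyperplane orthogonal to $(1,\dots,1)$. Since $\min_k d_k\equiv 0$, this projection has exactly the same neighbor pairs as $\psi$, so exact neighbors share some $A_k$. The paper then applies the continuous limit version (Theorem 3) to this map. This yields a connected set of exact neighbors lying in $\bigcup_k A_k\times A_k$ and joining a distant pair to a diagonal pair. Only at the end does it pass to the open set $\bigcup_k \mathcal{E}_k\times\mathcal{E}_k$ and convert that connected set into a path. This step is legitimate because connected components of an open subset of the manifold $M\times M$ are path-connected.

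\textbf{Your route.} You drop the last coordinate (the classical Lusternik--Schnirelmann trick) and approximate by a PL map in general position. You then prove a quantitative stability of the ``common set'' dichotomy and apply the PL path theorem (Theorem 2) directly.

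\textbf{Comparison.} Your route bypasses the limiting argument behind Theorem 3, whose proof in the paper is only sketched, and it produces the path immediately. The paper's route gives a stronger intermediate fact: a connected set of exact neighbors already lies in $\bigcup_k A_k\times A_k$. In the paper, the neighborhoods $\mathcal{E}_k$ are needed only to upgrade connectedness to a path.

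\textbf{Two minor points in your Step 3.}
\begin{itemize}
\item The set $\{x : d_k(x)\ge \eta/3 \ \forall k\le n\}$ grows, not shrinks, as $\eta\to 0$, so your stated justification is backwards. No second compactness constant is needed, though. For every $\eta>0$, such points avoid $A_1\cup\dots\cup A_n$ and therefore lie in $A_{n+1}$; in fact $\varepsilon<\eta/4$ already suffices.
\item The constants must be fixed in the order $\mathcal{E}'_k$, then $\eta$, then $\varepsilon$, then $f$. Your presentation introduces $f$ before $\eta$.
\end{itemize}
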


\begin{proof}
    Let $d$ be any metric on $M$, which is compatible with the topology of $M$. Denote by $d_k$ the distance function to the closure $A_k$ for each element of the cover. Setting $\psi(x) = (d_1(x), \dots, d_{n+1}(x))$ for $x \in M$, we obtain a continuous map $\psi\colon M \to \mathbb{R}^{n+1}$. Observe that for each $x$ at least one distant function is equal to zero, hence taking the composition of $\psi$ with projection to the $n$-dimensional subspace orthogonal to $(1, \dots, 1)$, we obtain a continuous map $f \colon M \to \mathbb{R}^n$ with the same set of $f$-neighbors as $\psi$. Applying Theorem 3 we obtain a connected component $\mathcal{S}$ which is covered by sets $A_k \times A_k$ in $M \times M$. Deform this component to a path, which is covered by open neighborhoods $\mathcal{E}_k \times \mathcal{E}_k$, $k = 1, \dots, n+1$. 
\end{proof}

Next we give a qualitative generalization of Tucker's lemma stated for $n$-dimensional spheres with $n \geq 1$. 

\begin{definition}
Let $M$ be a closed triangulable manifold of dimension $n \geq 1$ with a free involution $T$. 
A triangulation of $M$ is called a \emph{Tucker triangulation} if its vertices are labeled by elements of the set $\{-1, +1, \dots, -n, +n\}$ and the triangulation is antisymmetric with respect to $T$. \footnote{A vertex labeled $k$ is mapped to the vertex labeled $-k$ under the $T$-action. We call such vertices antipodal.}
An edge of a Tucker triangulation is called a \emph{complementary} if the sum of the labels of its endpoints is zero. 
A pair of points in $M$ is called a \emph{Tucker pair} if either the sum of the labels of the corresponding vertices is zero, or both points are midpoints complementary edges. Finally, by a \emph{path} in a triangulation we mean sequence of vertices $(v_0, v_1, \dots, v_T)$ such that for each 'moment' $t$, either $v_{t+1}$ and $v_t$ are adjacent (i.e., connected by an edge), or $v_{t+1} = v_t$.
\end{definition}

\begin{theorem}[Generalization of Tucker's lemma.]
    Let $n$ be a positive integer, let $\mathbb{S}^n$ be the standard $n$-dimensional Euclidean sphere, equipped with a Tucker triangulation, in Euclidean $(n+1)$-space. Then, there exists a pair of paths $(v_0, v_1, \dots, v_T)$ and $(w_0, w_1, \dots, w_T)$ in the triangulation such that:

    \begin{enumerate}[label=\arabic*.]
        \item The paths start at antipodal vertices and end at vertices connected by a complementary edge;
        \item For each $t$, the vertices $v_t$ and $w_t$ form a Tucker pair.
    \end{enumerate}

\end{theorem}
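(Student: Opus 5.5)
The plan is to reduce the statement to Theorem~1, using the standard piecewise linear map from Tucker's lemma. Define $f\colon \mathbb{S}^n \to \mathbb{R}^n$ on vertices by sending a vertex labeled $\pm k$ to $\pm e_k$, where $e_1,\dots,e_n$ is the standard basis, and extend affinely on each simplex of the Tucker triangulation. This $f$ is odd, $f(Tx)=-f(x)$, because the triangulation is antisymmetric.

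Take the distant relation given by antipodality. Let $K$ consist of the pairs $(x,y)$ with $y=-x$. Let $N$ be the complement of $K$, on which the chord midpoint $(x+y)/2 \neq 0$. The $\mathbb{Z}_2$-equivariant retraction of $N$ onto the diagonal sends $(x,y)$ to the point $\bigl((x+y)/|x+y|,\,(x+y)/|x+y|\bigr)$, and it is symmetric in $x$ and $y$. With this choice, the pair of distant points produced by Theorem~1 is an antipodal pair $(x,-x)$ with $f(x)=f(-x)=-f(x)$, so $f(x)=0$.

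Since $f$ is generally not in general position (all labels sit at the same few points), perturb it first. Replace $e_k$ by generic vectors $\pm u_k$ near $\pm e_k$, or perturb slightly while keeping the map odd, so that the induced map is in general position. Then apply Theorem~1 to get a path $\gamma$ in $\mathfrak{F}_f$ from a pair $(x,-x)$ with $f(x)=0$ to a diagonal pair $(z,z)$. The path lies in the pseudomanifold $\mathfrak{F}_f$ of Lemma~1 and passes through a sequence of $n$-simplices $[\mathcal{A}_i,\mathcal{B}_i]$ whose images coincide, crossing $(n-1)$-faces.

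The discretization step replaces each point of $\gamma(t)=(a,b)$ by a pair of vertices $(v_t,w_t)$ carried by the supports of $a$ and $b$, chosen so that $v_t$ and $w_t$ form a Tucker pair. The key observation is how $f(a)=f(b)$ forces this. Near a point of the image, the labels involved in the carrier simplices of $a$ and $b$ must overlap in a specific way: the coordinates of $f$ are differences of barycentric weights of the labels $+k$ and $-k$. So either $a$ and $b$ share a label, or cancellations force opposite labels. I would refine the statement here, noting that Tucker pairs as defined require label sum zero. I will therefore instead use $f(a)=-f(b)$-type neighbors, via the map $(a,b)\mapsto(a,Tb)$, which is presumably the intended bookkeeping. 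When $\gamma$ crosses a face, the vertex pairs change by adjacency or stay fixed, which gives the path condition.

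At the start, $f(x)=0$ forces $x$ to lie on a complementary edge, or in its midpoint in the limit. The antipodal vertex pair is read off from that edge. At the end, the diagonal endpoint lies in a folding pair of adjacent simplices; there the opposite-label bookkeeping yields two vertices joined by a complementary edge.

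The main obstacle is this discretization. One must show that along the combinatorial path in $\mathfrak{F}_f$, a consistent choice of vertices in the carriers always exists that forms a Tucker pair, with the midpoint-of-complementary-edges alternative absorbing the degenerate moments when no opposite-label vertices are available. One must also show that the choice changes only by single adjacency steps when a face is crossed. I would first attempt this with an explicit case analysis matching the cases (1), (2a) and (2b) in the proof of Lemma~1.
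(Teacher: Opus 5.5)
Your skeleton matches the paper's: the label map $f$ to the cross-polytope, the antipodal distant relation, the path theorem applied to general-position approximations, and the twist $(a,b)\mapsto(a,Tb)$. What is missing is the step that connects these pieces.

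\textbf{The path belongs to the wrong map.} The path theorem gives you a path of $\tilde f$-neighbors for the \emph{perturbed} map. Your label analysis, however, holds only for the exact label map: there the coordinates are differences of barycentric weights of the labels $\pm k$, so $f(a)=f(b)$ forces shared or cancelling labels. For $\tilde f$, the equality $\tilde f(a)=\tilde f(b)$ says nothing exact about labels, so your vertex choice along $\gamma$ has nothing to rest on.

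The paper closes this gap with a limit. It takes general-position maps $f_k\to f$ and chooses the paths $\gamma_k$ piecewise linear with a uniformly bounded number of vertices. After parametrizing by arc length, it extracts a convergent subsequence. This yields a genuine piecewise linear path $(a(t),b(t))$ of exact $f$-neighbors from an antipodal pair to a diagonal pair. The bounded complexity is essential, because the general limit argument of the continuous theorem produces only a connected set, not a path.

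Since $f$ itself is odd, the antipodal endpoint of the limit path is automatically a zero of $f$, so you need not keep the perturbations odd. Your first option, moving only the $2n$ targets $\pm e_k$ to $\pm u_k$, would not give general position anyway. All vertices with the same label would still share an image, leaving degenerate simplices and several folds over one image face.

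\textbf{Your endpoint bookkeeping is reversed.} Under the twist, the antipodal start $(x,Tx)$ with $f(x)=0$ becomes the diagonal pair $(x,x)$, and it is this end that produces the complementary edge. The diagonal end $(z,z)$ becomes $(z,Tz)$. A vertex $v$ of the simplex carrying $z$, together with $Tv$, then gives the antipodal vertices, whose labels sum to zero. So the parameter must be reversed, as in the paper's final path $(\tilde a(1-t),T\tilde b(1-t))$. Here $(\tilde a,\tilde b)$ is the limit path pushed onto edges of the triangulation induced by $f$; it starts at antipodal midpoints of complementary edges and ends at a common vertex.

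Your claim that the folding endpoint yields two vertices joined by a complementary edge has no justification, since nothing forces $f(z)=0$ at the diagonal end. Likewise, the discretization should be carried out on the exact limit path, where $f$ is simplicial and $a(t)$, $b(t)$ run through simplices with equal images. The case analysis of Lemma~1 does not fit, because it describes the complex of the perturbed map.
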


\begin{proof}
    
    Let $L(v)$ denote a label of a vertex $v$ in the triangulation. Define a piecewise linear map $f$ of $M$
    to the standard $n$-dimensional cross-polytope $C_n$ (with vertices of type $[0, \dots, \pm 1, \dots, 0]$) as an affine extension of a vertex map defined by $f(v) = [0, \dots, \operatorname{sgn}(L(v)), \dots, 0]$, where the nonzero entry $\operatorname{sgn}(L(v))$ is placed at position $|L(v)|$.  
    
    
    Consider a sequence of piecewise linear maps in general position $\{f_k\colon M \to \mathbb{R}^n\}_{k=1}^{\infty}$, which converges to $f$. Since $T$-action gives a well-defined distant relation on $\mathbb{S}^{n}$, by Theorem 1 there exists a path $\gamma_k$ of $f_k$-neighbors in $M \times M$ which starts at a pair of antipodal points, and ends at a pair of identical points. It follows that $\gamma_k$ can be chosen to be piecewise linear, with the number of vertices uniformly bounded. Choosing a convergent subsequence \footnote{The paths $\gamma_k$ should also be equipped with an appropriate parametrization, for example, by arc length with respect to the metric induced from $\mathbb{R}^{n+1}\times\mathbb{R}^{n+1}$.} from $\{\gamma_k\}_{k=1}^{\infty}$ we obtain a peace-wise linear path $\gamma$ of $f$-neighbors $(a(t),b(t)) \in M \times M$ satisfying the same conditions, where $t \in [0,1]$.
    
    It's clear that trajectories $a(t)$ and $b(t)$ lie in some chains of incident (or adjacent) simplices (of perhaps varying dimension) of the induced by $f$ triangulation of $M$ such that images of corresponding simplices coincide ($f$ is simplicial in the induced triangulation). Now it's easy to deform paths $(a(t), b(t))$ into paths of $f$-neighbors $(\tilde{a}(t), \tilde{b}(t))$, which travel along edges of the induced triangulation by $f$: the paths start at antipodal midpoints of complementary edges and end at the same vertex. The statement follows if we take the path $(\tilde{a}(1-t), T(\tilde{b}(1-t)))$.
    
    
    

\end{proof}

\bmhead{Acknowledgements}

This research project was started during the Summer Research Program 2025 organized by the Laboratory of Combinatorial and Geometric Structures at MIPT. This research was supported by the Ministry of Science and Higher Education of the Russian Federation, agreement 075-15-2025-344 date 29/04/2025.

\end{document}